\newtheorem{theorem}{Theorem} 
\newtheorem{definition}{Definition}
\newtheorem{remark}{Remark}
\newcommand{\A}{\mathcal{A}}
\newcommand{\B}{\mathcal{B}}
\newcommand{\C}{\mathcal{C}}
\newcommand{\I}{\mathcal{I}}
\newcommand{\p}{\mathcal{P}}
\newcommand{\F}{\mathcal{F}}
\newcommand{\LL}{\mathcal{L}}
\newcommand{\abs}[1]{\left\lvert{#1}\right\rvert}
\newcommand{\floor}[1]{\left\lfloor{#1}\right\rfloor}
\DeclareMathOperator{\La}{La}
\title{On an extremal problem involving a pair of forbidden posets}
 \author
{
Casey Tompkins\thanks{Alfr\'ed R\'enyi Institute of Mathematics, Hungarian Academy of Sciences.  e-mail: ctompkins496@gmail.com} \thanks{This research was supported by the National Research, Development and Innovation Office --- NKFIH under the grant K116769.}
\and
Yao Wang\thanks{Department of Mathematics, Texas A\&M University. e-mail: wangyao@math.tamu.edu }
}
\begin{document}
\maketitle
\begin{abstract}
Resolving a conjecture of Methuku and the first author we determine the size of the largest family of subsets of an $n$-element set avoiding both $Y_k$ and $Y_k'$ as induced subposets. The result follows as a consequence of the analogous result on a cyclical grid poset.  
\end{abstract}
\section{Introduction}
Let $[n]=\{1,2,\dots,n\}$ and for any set $X$, let $2^X$ denote the power set of $X$ and $\binom{X}{i}$ denote the set of all $i$-element subsets of $X$.  The sets $\binom{[n]}{i}$ are referred to as levels in the Boolean lattice $2^{[n]}$. We use the notation $\Sigma(n,k)$ for the sum of the sizes of the $k$ largest levels in the Boolean lattice $2^{[n]}$.

Inspired by classical results of Sperner \cite{sperner1928satz} and Erd\H{o}s \cite{erdos1945lemma}, Katona and Tarj\'an \cite{katona1983extremal} launched the systematic study of extremal forbidden poset problems in the Boolean lattice.  Over the last decade, this field has undergone substantial development with the extremal function of a variety of posets being determined exactly or asymptotically.   To state our results we will need to recall some definitions.  

For partially ordered sets (posets) $P$ and $Q$ we say that $P$ is a \emph{subposet} of $Q$ if there exists an injection $\phi:P \to Q$ such that $x<y$ in $P$ implies $\phi(x) < \phi(y)$ in $Q$.  We say that $P$ is an \emph{induced subposet} of $Q$ if there exists an injection $\phi$ such that $x<y$ in $P$ if and only if $\phi(x)<\phi(y)$ in $Q$. We will regard collections of sets $\A \subset 2^{[n]}$ as posets with respect to the containment relation.  We say that $\A$ is $P$-free if $P$ is not a subposet of $\A$, and $\A$ is induced $P$-free if $P$ is not an induced subposet of $\A$. Then we may define the following extremal functions:
\begin{definition}
For a collection of posets $\p$, we define
\begin{displaymath}
\La(n,\p) = \max \{\abs{\A}:\A \subset 2^{[n]} \mbox{  and $\A$ is $P$-free for all $P\in\p$}\}
\end{displaymath}
and
\begin{displaymath}
\La^*(n,\p) = \max \{\abs{\A}:\A \subset 2^{[n]} \mbox{  and $\A$ is induced $P$-free for all $P\in\p$}\}.
\end{displaymath}
When $\p = \{P\}$, we write simply $\La(n,P)$ (and similarly $\La^*(n,P)$) in place of $\La(n,\p)$ ($\La^*(n,\p)$).
\end{definition}
Already in the first paper on the topic, Katona and Tarj\'an proved the following extremal result for a pair of forbidden posets.

\begin{theorem}[Katona-Tarj\'an \cite{katona1983extremal}]
\label{KT}
Let $V$ and $\Lambda$ be the posets on three elements $x,y,z$ defined by the relations $x<y,z$ and $x,y<z$, respectively.  Then,
\begin{displaymath}
\La(n,\{V,\Lambda\}) = \La^*(n,\{V,\Lambda\}) = 2 \binom{n-1}{\floor{\frac{n-1}{2}}}.
\end{displaymath}
\end{theorem}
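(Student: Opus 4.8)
The plan is to prove the two inequalities $\La(n,\{V,\Lambda\})\ge 2\binom{n-1}{\floor{(n-1)/2}}$ and $\La^*(n,\{V,\Lambda\})\le 2\binom{n-1}{\floor{(n-1)/2}}$; since every $\{V,\Lambda\}$-free family is a fortiori induced $\{V,\Lambda\}$-free, we have $\La(n,\{V,\Lambda\})\le\La^*(n,\{V,\Lambda\})$, so these two bounds pin both quantities to the claimed value. For the lower bound I would exhibit an explicit family: fix the ground element $n$, let $\mathcal{L}=\binom{[n-1]}{\floor{(n-1)/2}}$ be a middle layer of the $(n-1)$-cube, and set $\A=\{S,\,S\cup\{n\}:S\in\mathcal{L}\}$. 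Two distinct sets of $\A$ are comparable only within a single pair $\{S,S\cup\{n\}\}$: sets of equal size lying in different pairs are incomparable, and $S\subseteq S'\cup\{n\}$ forces $S\subseteq S'$, impossible for distinct equal-sized $S,S'$. Thus the comparability graph of $\A$ is a perfect matching, so $\A$ contains neither $V$ nor $\Lambda$ as a (non-induced) subposet, giving $\abs{\A}=2\binom{n-1}{\floor{(n-1)/2}}$.

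For the upper bound the first step is structural. I claim $\A$ is induced $\{V,\Lambda\}$-free if and only if, as a poset under inclusion, $\A$ is a disjoint union of chains that are pairwise fully incomparable. Indeed, being induced $\Lambda$-free says no set has two incomparable subsets in $\A$, and being induced $V$-free says no set has two incomparable supersets; from these one checks that the comparability relation on $\A$ is transitive, by examining the four orientations of $x\sim y\sim z$ (each either yields $x\sim z$ directly, via $x<y<z$ or its dual, or else exhibits a common upper or lower bound of two incomparable sets and hence a forbidden induced $V$ or $\Lambda$). Therefore each connected component of the comparability graph is a clique, i.e.\ a chain, and distinct components are incomparable. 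Bounding $\La^*(n,\{V,\Lambda\})$ thus amounts to bounding the size of a union of pairwise-incomparable chains.

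To get the sharp constant I would pass to a cyclic grid by the permutation (cycle) method. For each of the $(n-1)!$ cyclic orderings $\omega$ of $[n]$, the arcs (cyclically consecutive blocks) of lengths $1,\dots,n-1$ form an induced subposet $\Gamma_\omega$ of $2^{[n]}$ with exactly $n$ elements on each level, and all $\Gamma_\omega$ are isomorphic to one ``cyclic grid'' poset $\Gamma$. If $\A_\omega$ is the set of arcs of $\omega$ lying in $\A$, then $\A_\omega$ is again a union of pairwise-incomparable chains inside $\Gamma$, so $\abs{\A_\omega}\le M$, where $M$ is the maximum size of such a family in $\Gamma$. (The sets $\emptyset$ and $[n]$ may be ignored: each is comparable to everything, so its presence forces $\A$ to be a single chain, hence tiny.) Averaging $\abs{\A_\omega}\le M$ over all $\omega$, and using that a fixed $k$-set is an arc in exactly $k!(n-k)!$ of the $(n-1)!$ orderings, yields the LYM-type inequality $\sum_{A\in\A}1/\binom{n}{\abs{A}}\le M/n$. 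I would then prove $M=2\ceil{n/2}$ and optimize: subject to a budget $\sum_A 1/\binom{n}{\abs{A}}\le M/n$, the size $\abs{\A}$ is maximized by loading the heaviest levels, and filling the middle level (for even $n$) or the two equal middle levels (for odd $n$) converts the budget exactly into $2\binom{n-1}{\floor{(n-1)/2}}$ sets.

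The main obstacle is the sharp grid estimate $M=2\ceil{n/2}$, which is where the real content lies: crude bounds (at most two family elements on any maximal chain, or ``$\A$ is a union of two antichains'') give only $2\binom{n}{\floor{n/2}}$ and are off by a factor close to $2$ for even $n$. The value of the reduction is that $\Gamma$ is level-transitive and every middle level has the same size $n$, so the level-imbalance that defeats a direct Boolean argument disappears; the cyclic symmetry should allow a self-contained proof of the grid bound, for instance via a chain cover of $\Gamma$ or a second averaging internal to $\Gamma$. One must also verify the elementary convexity step that the LYM budget is genuinely maximized by concentrating $\A$ on the middle levels.
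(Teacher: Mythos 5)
First, note that the paper does not actually prove Theorem~\ref{KT}: it is quoted from Katona--Tarj\'an with only the extremal construction displayed, so there is no in-paper proof to compare against. Evaluating your argument on its own terms: the lower-bound construction and its verification are correct (and match the paper's), and your structural lemma is also correct --- being induced $\{V,\Lambda\}$-free is equivalent to the comparability relation being transitive with no common bounds across components, i.e.\ to $\A$ being a disjoint union of pairwise fully incomparable chains. The reduction to cyclic permutations and the derivation of $\sum_{A\in\A}1/\binom{n}{\abs{A}}\le M/n$ from a uniform bound $\abs{\A_\omega}\le M$ are also standard and sound.

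The fatal gap is the value of $M$: the claim $M=2\ceil{n/2}$ is false for $n\ge 6$, and this is exactly where you said the real content lies. Restrict attention to the two levels of $\I(n)$ consisting of arcs of lengths $\floor{n/2}$ and $\floor{n/2}+1$; the comparability graph between them is a cycle of length $2n$, and a union of pairwise incomparable chains supported on these two levels is precisely a dissociation set (an induced subgraph of maximum degree one) of that cycle, which can have $\floor{4n/3}$ vertices. Concretely, for $n=6$ the four $2$-chains $\{123\subset 1234\}$, $\{345\subset 2345\}$, $\{456\subset 4561\}$, $\{612\subset 5612\}$ are pairwise fully incomparable, giving $8>6=2\ceil{6/2}$ arcs of $\I(6)$; this family is even non-induced $\{V,\Lambda\}$-free. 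With the true $M\ge\floor{4n/3}$ your LYM constant is at least $4/3-o(1)$ rather than $1+o(1)$, and the final bound becomes roughly $\tfrac{4}{3}\binom{n}{\floor{n/2}}$, which does not recover $2\binom{n-1}{\floor{(n-1)/2}}$. This is not a repairable computation but a failure of the strategy: the unweighted cycle method cannot be sharp here, which is why the paper's Theorem~\ref{cycle} (the bound $\abs{\I(n)^\pi\cap\A}\le kn$) is stated only for $k\ge 2$ --- its $k=1$ analogue is exactly the false statement above --- and why the paper explicitly remarks that the $k=1$ case behaves differently. The known proofs of Theorem~\ref{KT} instead exploit the component structure (isolated sets and short chains) through a weighted or shadow-type counting that charges each multi-element component more than the sum of its Lubell weights.
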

\noindent An extremal construction for Theorem \ref{KT} is given by the family 
\begin{displaymath}
\binom{[n-1]}{\floor{\frac{n-1}{2}}} \cup \left\{F\cup\{n\}: F \in \binom{[n-1]}{\floor{\frac{n-1}{2}}} \right\}.
\end{displaymath}
Note that if $n$ is even, then taking the middle level $\binom{[n]}{n/2}$ is also extremal.

Let $Y_k$ denote the poset with elements $x_1,x_2,\dots,x_k,y,z$ such that $x_1 < x_2 <\dots<x_k<y,z$, and let $Y_k'$ denote the same poset but with all relations reversed.  Let $B$, the so-called butterfly poset, be defined on $w,x,y,z$ with relations $w,x < y,z$.  De Bonis, Katona and Swanepoel \cite{de2005largest} proved that $\La(n,B) = \Sigma(n,2)$, with the middle two levels of $2^{[n]}$ giving an extremal construction.  In fact, their proof directly implies the stronger result $\La(n,\{Y_2,Y_2'\}) = \Sigma(n,2)$. (This result is stronger since $B$ is a subposet of $Y_2$ and $Y_2'$.) 

In \cite{methuku2014exact} the first author and Methuku proved two natural extensions of this result.  Namely, $\La(n,\{Y_k,Y_k'\}) = \Sigma(n,k)$ was proved, thereby generalizing the result of De Bonis, Katona and Swanepoel to all $k$, as well as $\La^*(n,\{Y_2,Y_2'\}) = \Sigma(n,2)$, yielding a stronger result in the case $k=2$.  Constructions, yielding matching lower bounds are given by the union of $k$ largest levels from $2^{[n]}$.  In the same paper, it was conjectured that the common generalization of these two results should hold: $\La^*(n,\{Y_k,Y_k'\}) = \Sigma(n,k)$. Our main result is a proof of this conjecture.  
\begin{theorem}
\label{bn}
For all $k \ge 2$ and $n \ge k+1$, we have $\La^*(n,\{Y_k,Y_k'\}) = \Sigma(n,k)$.  
\end{theorem}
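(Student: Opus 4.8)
The lower bound is the easy direction: the union of the $k$ largest levels of $2^{[n]}$ has size $\Sigma(n,k)$ and contains no chain on $k+1$ elements, so it cannot contain even a non-induced copy of $Y_k$ or $Y_k'$ (each of which has a chain on $k+1$ elements). Thus the entire content is the upper bound $\La^*(n,\{Y_k,Y_k'\}) \le \Sigma(n,k)$, and the plan is to deduce it from an analogous statement on a cyclic ``grid.''

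For a circular arrangement $\sigma$ of $[n]$, let $G_\sigma$ be the family of all \emph{arcs} of $\sigma$, i.e.\ the sets occupying cyclically consecutive positions; ordered by inclusion, $G_\sigma$ is isomorphic to a fixed poset $G_n$, the \emph{cyclical grid}, whose proper elements are pairs (start, length), each arc covering the two arcs obtained by deleting an endpoint and being covered by the two obtained by adding one, so that each length-$\ell$ layer is an $n$-element cyclic antichain. The reduction rests on two facts. First, since induced-freeness passes to subposets, $\A \cap G_\sigma$ is induced $\{Y_k,Y_k'\}$-free for every $\sigma$. Second, a uniform random $\sigma$ makes a fixed set $A$ with $1 \le \abs{A} \le n-1$ an arc with probability $\frac{n\,\abs{A}!(n-\abs{A})!}{n!} = \frac{n}{\binom{n}{\abs{A}}}$, so that, once the two sets $\emptyset$ and $[n]$ (which affect neither bound by more than a constant) are set aside,
\[
\mathbb{E}\,\abs{\A \cap G_\sigma} \;=\; n \cdot \lubell .
\]
Hence if I can prove the grid bound $\abs{\A\cap G_\sigma} \le kn$ for every $\sigma$, I obtain $\lubell \le k$, and the standard generalized-Lubell inequality (a family of Lubell mass at most $k$ has at most $\Sigma(n,k)$ sets, the mass being spent most cheaply on the largest levels) yields $\abs{\A} \le \Sigma(n,k)$.

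It therefore suffices to prove the grid statement: every induced $\{Y_k,Y_k'\}$-free $\mathcal{B} \subseteq G_n$ satisfies $\abs{\mathcal{B}} \le kn$, which is tight by taking $k$ consecutive length-layers. I would analyze $\mathcal{B}$ through its chain structure. Writing $h(A)$ for the length of the longest chain of $\mathcal{B}$ ending at $A$ from below and $d(A)$ for the longest chain starting at $A$ upward, the key structural observation is that the two forbidden posets confine all branching to the extreme layers: if $A$ has two incomparable elements of $\mathcal{B}$ strictly above it then $h(A) \le k-1$, since otherwise the top $k$ elements of a chain ending at $A$, together with those two incomparable sets, form an induced $Y_k$; dually, any $A$ with two incomparable elements of $\mathcal{B}$ strictly below it has $d(A) \le k-1$. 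Consequently, every element outside the bottom $k-1$ and top $k-1$ Mirsky layers has a unique cover above and below, so the ``trunk'' of $\mathcal{B}$ is a disjoint union of unbranched chains, and $\mathcal{B}$ as a whole is a collection of long chains carrying small ``brooms'' only near their two ends. Combining this with the fact that the width of $G_n$ is exactly $n$ (each Mirsky layer is an antichain, and $G_n$ decomposes into $n$ chains) is what should drive the count down to $kn$.

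The main obstacle is precisely this final count. Because induced-freeness permits chains far longer than $k$, one cannot simply bound $\mathcal{B}$ by $k$ antichains of width $n$, and the naive split into ``low,'' ``high,'' and ``trunk'' parts double-counts and overshoots $kn$. Making the argument tight appears to require a charging scheme exploiting the two-dimensional coordinates of the cyclic grid, assigning each trunk chain to the layer slots it would otherwise displace, so that the length saved by every long unbranched chain is paid back exactly against the $kn$ budget. I expect this charging, together with a careful treatment of how several long chains may share endpoints in the extreme layers (where $Y_k$ and $Y_k'$ must be invoked in tandem), to be the technical heart of the proof; the Boolean-lattice theorem then follows immediately from the averaging above.
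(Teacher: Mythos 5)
Your reduction is exactly the paper's: Katona's cyclic permutation method, the averaging identity $\mathbb{E}\,\abs{\A \cap \I(n)^\pi} = n\sum_{A\in\A} 1/\binom{n}{\abs{A}}$, and the generalized LYM inequality to pass from Lubell mass at most $k$ to $\Sigma(n,k)$. But there are two genuine gaps. First, essentially all of the content of the theorem lives in the cyclic bound $\abs{\mathcal{B}} \le kn$, and you do not prove it: your structural observation (two incomparable sets above $A$ force the height of $A$ below $k$) is correct, but you explicitly concede that the final count is ``the main obstacle'' and leave the charging scheme unspecified. The paper's proof of this bound (Theorem \ref{cycle}) is where all the work is: it partitions $\I(n)$ twice, into $n$ ascending and $n$ descending chains, uses the fact that an ascending and a descending chain crossing at $X$ consist of pairwise incomparable sets above (and below) $X$, and then runs an intricate grouping argument --- a weight $\ell(C)-k$ per chain, three types of groupings built by repeatedly following crossing chains, and a directed graph on groupings whose paths and cycles certify that the total weight is nonpositive. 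Nothing in your sketch substitutes for this, and your proposed route through Mirsky layers and a width-$n$ chain decomposition is a different (untested) angle rather than a completed argument.

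Second, your parenthetical that $\varnothing$ and $[n]$ ``affect neither bound by more than a constant'' is wrong for an exact result. If $\varnothing \in \A$, applying the LYM inequality to $\A \setminus \{\varnothing\}$ only gives $\abs{\A} \le \Sigma(n,k)+1$, which does not prove the theorem. The paper closes this with a separate argument: if both $\varnothing$ and $[n]$ lie in $\A$, it inducts on $k$ (the rest of the family is induced $Y_{k-1},Y_{k-1}'$-free, so $\abs{\A} \le 2+\Sigma(n,k-1) < \Sigma(n,k)$); if exactly one does, it shows that equality $\abs{\A\setminus\{\varnothing\}} = \Sigma(n,k)$ forces that family to occupy the $k$ (or, depending on the parity of $n-k$, $k+1$) largest levels, inside which an explicit induced $Y_k$ with $\varnothing$ as its minimum is exhibited. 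This case analysis is a necessary part of the proof, not a constant-order nuisance to be set aside.
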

\begin{remark}
During the preparation of the manuscript we learned Theorem \ref{bn} has been obtained independently by Martin, Methuku, Uzzell and Walker \cite{martin2017simple}.  The methods of proof are different: They use a discharging argument whereas we use cyclic permutations.  
\end{remark}
Theorem \ref{bn} demonstrates that the extremal function $\La^*(n,\{Y_k,Y_k'\})$ behaves differently in the $k \ge 2$ case than in the $k=1$ case (Katona and Tarj\'an's result on $\{V,\Lambda\}$). Theorem \ref{bn} (as well as the results of \cite{methuku2014exact}) are also a generalization of the theorem of Erd\H{o}s \cite{erdos1945lemma} which states that a family of sets with no $(k+1)$-chain ($A_1\subset A_2 \subset \dots \subset A_{k+1}$) can have size at most $\Sigma(n,k)$.

In \cite{methuku2014exact} the value of $\La(n,\{Y_k,Y_k'\})$ was determined using Katona's \cite{katona1972simple} method of cyclic permutations. In this paper we will use the same method to determine $\La^*(n,\{Y_k,Y_k'\})$.  We begin by recalling the basic notions involved in this approach.  

Let $\I(n)$ denote the collection of sets formed by arranging the numbers $1,2,\dots,n$ in order around a circle and taking those sets (excluding $\varnothing$ and $[n]$) which form intervals along this cyclic arrangement (e.g., $\{2,3,4\}$ or $\{n-1,n,1,2\}$). Let $\pi \in S_n$ be a permutation and set $\I(n)^\pi = \{\{\pi(a_1),\pi(a_2),\dots,\pi(a_r)\}:\{a_1,a_2,\dots,a_r\} \in \I(n)\}$.  A now standard double-counting argument of Katona \cite{katona1972simple} shows that if for any family $\A \subset 2^{[n]}$ such that $\varnothing,[n] \not\in \A$ we have $\A \cap I(n)^\pi \le k$ for all $\pi$, then $\sum_{A \in \A} \frac{1}{\binom{n}{\abs{A}}} \le k$.  We prove that this is indeed the case for induced $Y_k,Y_k'$-free families.

\begin{theorem}
\label{cycle}
For all $n \ge k+1$ where $k \ge 2$ and all $\pi \in S_n$, if $\A \subset 2^{[n]}$ is an induced $Y_k,Y_k'$-free family of sets, then $\abs{\I(n)^\pi \cap \A} \le kn$. 
\end{theorem}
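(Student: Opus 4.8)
The plan is to work entirely in the arc poset. Fix $\pi$ and identify $\I(n)^\pi$ with the family of all $n(n-1)$ proper arcs of a cyclic order on $n$ points, so that containment of sets becomes containment of arcs; I will write $\A$ for this family of arcs. I first record the structure. For each length $\ell\in\{1,\dots,n-1\}$ the level $L_\ell$ of length-$\ell$ arcs is an antichain of size $n$; the arcs sharing a fixed boundary gap form a maximal chain of length $n-1$; and an arc of length $\le n-2$ is covered by exactly two arcs (extend left or right), so the Hasse diagram between consecutive levels is a $2$-regular bipartite graph (a single $2n$-cycle). This is precisely the cyclic grid structure. The key preliminary step is to reformulate the hypotheses: writing $d(A)$ for the length of a longest chain of $\A$ with top $A$, the family $\A$ is induced $Y_k$-free if and only if every $A\in\A$ with $d(A)\ge k$ has all of its $\A$-superarcs pairwise comparable (``no up-branch above a deep arc''); dually, $\A$ is induced $Y_k'$-free if and only if every arc that is the minimum of a $k$-element chain of $\A$ has all of its $\A$-subarcs pairwise comparable.

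The engine I would use is a peeling reduction. \emph{Claim:} if $\A$ is induced $Y_k$-free then $\A\setminus\min(\A)$ is induced $Y_{k-1}$-free, and dually $\A\setminus\max(\A)$ is induced $Y_{k-1}'$-free. Indeed, an induced $Y_{k-1}$ inside $\A\setminus\min(\A)$ consists of a chain $A_1\subsetneq\dots\subsetneq A_{k-1}$ together with incomparable $B,C\supsetneq A_{k-1}$; since $A_1$ is not minimal in $\A$ it has a subarc $A_0\in\A$, and then $A_0\subsetneq A_1\subsetneq\dots\subsetneq A_{k-1}$ with the same $B,C$ is an induced $Y_k$ in $\A$, a contradiction. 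I would iterate this, removing the bottom antichain to relax the $Y_k$ constraint and the top antichain to relax the $Y_k'$ constraint, charging at most one level ($\le n$ arcs) to each unit of relaxation and driving the problem toward a base case.

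The difficulty — and the point where the cyclic geometry must genuinely enter — is that naive peeling is lossy: relaxing a single index can strip off a full ring of $n$ arcs, which would only produce a bound of order $(k+1)n$, whereas the extremal family of $k$ full rings shows there is no slack to spare. To recover the exact constant I would instead handle both directions at once through the Mirsky decomposition $\A=\bigsqcup_j B_j$ with $B_j=\{A:d(A)=j\}$, where each $B_j$ is an antichain and (one verifies that every antichain of arcs has size at most $n$) $|B_j|\le n$; I would then use the no-up-branch property on the deep part $\{d\ge k\}$ and the no-down-branch property on the shallow part to argue that at most $k$ of the antichains $B_j$ can be wide, the remaining arcs organizing into unbranched chains whose total contribution is absorbed. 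The main obstacle is exactly this simultaneous bookkeeping of the $Y_k$ and $Y_k'$ constraints in the presence of wrap-around: because the cyclic grid has no global maximum or minimum one cannot simply induct along the levels from one end, and one must exploit the $2n$-cycle cover structure between consecutive levels to make the ``at most $k$ wide levels'' phenomenon precise. I expect the cleanest execution is to prove this sharpened statement directly on the abstract cyclic grid poset and then read off Theorem \ref{cycle} as the special case.
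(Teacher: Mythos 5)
Your reduction of induced $Y_k$-freeness to the statement ``every $A\in\A$ of depth at least $k$ has its set of $\A$-supersets totally ordered'' is correct, and the peeling claim is also correct; but, as you yourself observe, peeling is lossy and cannot give the sharp constant $kn$. The entire content of Theorem \ref{cycle} is that sharp constant, and your proposed route to it --- a Mirsky decomposition $\A=\bigsqcup_j B_j$ by depth into antichains of size at most $n$, followed by the assertion that ``at most $k$ of the $B_j$ can be wide, the remaining arcs organizing into unbranched chains whose total contribution is absorbed'' --- is left entirely unproved at exactly the point where all the work must happen. Nothing in the sketch defines ``wide,'' controls the number of nonempty $B_j$ (the height of $\A$ can be as large as $n-1$), or bounds the total size of the narrow antichains: a priori $\A$ could consist of many antichains each of size roughly $n/2$, and your outline supplies no mechanism to exclude this. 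You explicitly flag this simultaneous bookkeeping of the $Y_k$ and $Y_k'$ constraints as ``the main obstacle,'' which is an accurate self-assessment; it means the proposal is a plan with a gap at its core step rather than a proof.

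For comparison, the paper closes precisely this gap by working with the $n$ ascending and $n$ descending maximal chains of $\I(n)$ (your ``arcs sharing a fixed boundary gap''), not with the antichain levels. The one geometric input is that if an ascending chain and a descending chain cross at a set $X$, then any two sets strictly above $X$, one from each chain, are incomparable (and dually below $X$); this is what converts a chain meeting $\A$ in too many sets into an induced $Y_k$ or $Y_k'$ and is the only place the two forbidden posets interact. The sharp bound then comes from an amortized weight argument: each chain $C$ is assigned weight $\ell(C)-k$, the $2n$ chains are partitioned into groupings of three types whose weights are shown to be nonpositive after a final directed-graph argument pairing groupings that terminate against one another. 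Your sketch never invokes the crossing-incomparability fact or any substitute for this amortization, so as written it does not establish the theorem.
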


Theorem \ref{cycle} will be proved in Section \ref{pf}.  As a consequence of Theorem \ref{cycle} we have the following LYM-type inequality.

\begin{theorem}
\label{LYM}
Let $n \ge k+1$ where $k \ge 2$ and let $\A \subset 2^{[n]}$ be an induced $Y_k,Y_k'$-free collection of sets such that $\varnothing,[n] \not\in \A$, then
\begin{equation}
\label{LYMineq}
\sum_{A \in \A} \frac{1}{\binom{n}{\abs{A}}} \le k.
\end{equation}
\end{theorem}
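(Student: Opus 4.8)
The plan is to derive \eqref{LYMineq} from Theorem \ref{cycle} by Katona's double-counting of incidences over all cyclic arrangements, the point being that a set of fixed size is an interval for a number of permutations that depends only on that size. First I would count the pairs $(\pi,A)$ with $\pi\in S_n$, $A\in\A$, and $A\in\I(n)^\pi$, summing over $\pi$ first. By Theorem \ref{cycle} we have $\abs{\I(n)^\pi\cap\A}\le kn$ for every $\pi$, so the number of such pairs is at most $kn\cdot n!$.

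Next I would evaluate the same count by summing over $A$ first. Fix $A\in\A$ with $\abs{A}=a$; since $\varnothing,[n]\notin\A$ we have $1\le a\le n-1$. The condition $A\in\I(n)^\pi$ is equivalent to $\pi^{-1}(A)$ being one of the $n$ intervals of length $a$ in the natural cyclic order on $[n]$. For each such interval $S$, the permutations $\pi$ with $\pi(S)=A$ are exactly those restricting to a bijection $S\to A$ and to a bijection $[n]\setminus S\to[n]\setminus A$, giving $a!\,(n-a)!$ choices; moreover distinct intervals $S$ yield disjoint sets of permutations, because $\pi^{-1}(A)$ is determined by $\pi$. Hence the number of $\pi$ with $A\in\I(n)^\pi$ equals $n\cdot a!\,(n-a)!$.

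Combining the two evaluations gives
\begin{equation*}
\sum_{A\in\A} n\cdot \abs{A}!\,(n-\abs{A})! \;=\; \sum_{\pi\in S_n}\abs{\I(n)^\pi\cap\A}\;\le\; kn\cdot n!,
\end{equation*}
and dividing through by $n\cdot n!$ yields exactly \eqref{LYMineq}. The only place the hypothesis $\varnothing,[n]\notin\A$ is used is to ensure that $\I(n)$ contains no set of size $0$ or $n$, so that every term of the sum in \eqref{LYMineq} corresponds to a set that genuinely occurs as an interval for some $\pi$ and the two sides of the double count align term by term. I do not anticipate any real obstacle here: the entire combinatorial difficulty has been absorbed into Theorem \ref{cycle}, and what remains is the routine averaging argument above.
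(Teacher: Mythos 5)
Your proposal is correct and is precisely the standard Katona double-counting argument that the paper invokes (via the citation to \cite{katona1972simple} and the explicit calculation in \cite{methuku2014exact}) rather than writing out; the count of $n\cdot\abs{A}!\,(n-\abs{A})!$ permutations per set, the disjointness over intervals, and the role of excluding $\varnothing$ and $[n]$ are all handled correctly. No discrepancy with the paper's approach.
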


From the LYM-type inequality Theorem \ref{LYM} we can easily deduce our main result.

\begin{proof}[Proof of Theorem \ref{bn}]
If $\B$ is an family of sets satisfying the inequality \eqref{LYMineq}, then it is easy to see that $\abs{\B} \le \Sigma(n,k)$.  Moreover, if $\B$ contains a set $B$ whose size is not the size of a set in the $k$ largest levels of $2^{[n]}$, then $\abs{\B} \le \Sigma(n,k)-1$.

Let $\A$ be an induced $Y_k,Y_k'$-free family (assume $k\ge 3$; the $k=2$ case was settled in \cite{methuku2014exact}). We will assume inductively that the result holds for smaller values of $k$.  

We are done by Theorem~\ref{LYM} if neither $\varnothing$ nor $[n]$ is in $\A$. Suppose, now, that both $\varnothing$ and $[n]$ are in $\A$.   The family $\B = \A \setminus \{\varnothing,[n]\}$ is an induced $Y_{k-1},Y_{k-1}'$-free family and so $\abs{\B} \le \Sigma(n,k-1)$.  It follows that $\abs{\A} \le 2 + \Sigma(n,k-1) < \Sigma(n,k)$.  

Finally, suppose $\varnothing \in \A$ and $[n] \not\in \A$ (the remaining case follows by taking complements).  Let $\B = \A \setminus \{\varnothing\}$. If $\abs{\B} \le \Sigma(n,k)-1$ we are done. Suppose $\abs{\B} = \Sigma(n,k)$, then $\B$ is a subset of the $k$ (or $k+1$) largest levels in $2^{[n]}$.  If $n \neq k$ modulo 2, then $\B$ is uniquely determined, and it is easy to find an induced $Y_k$ in $\A$.  Indeed, let $\LL_1,\LL_2,\dots,\LL_k$ be the $k$ largest levels of $2^{[n]}$, ordered by the cardinality of the sets belonging to each level. Then $\B = \cup_{i=1}^k \LL_i$, and choosing an arbitrary chain $L_1 \subset L_2 \subset \dots \subset L_{k-1}$ where $L_i \in \LL_i$ along with two sets $A \in \LL_k$ and $B \in \LL_k$ with $L_{k-1} = A \cap B$ yields an induced $Y_k$ consisting of $\varnothing,L_1,L_2,\dots,L_{k-1},A$ and $B$.

Suppose $n =  k$ modulo 2.  Let $\LL_1,\LL_2,\dots,\LL_{k+1}$ be the $k+1$ levels in $2^{[n]}$ of largest size, ordered by the cardinality of the sets belonging to each level.  Then $\A = \cup_{i=2}^{k} \LL_i \cup \C$ where $\C \subset \LL_1 \cup \LL_{k+1}$ and $\abs{\C} = \abs{\LL_1} = \abs{\LL_{k+1}}$.  If $\C \cap \LL_1 = \varnothing$, then we have $\C = \LL_2$ and and we find an induced $Y_k$ as in the previous case.  If $\C \cap \LL_1 \ne \varnothing$, then let $C \in \C \cap \LL_1$.  Take a $(k-1)$-chain $C \subset L_2 \subset L_3 \subset \dots \subset L_{k-1}$ where $L_i \in \LL_i$, and take $A,B\in \LL_{k}$ such that $L_{k-1} = A \cap B$.  Then $\varnothing,C,L_2,L_3,\dots,L_{k-1},A$ and $B$ yields an induced $Y_k$.
\end{proof}

\section{Proof of Theorem \ref{cycle}}
\label{pf}
Without loss of generality, we may assume $\pi$ is the identity and bound the size of $\A \cap \I(n)$. That is we must bound the number of intervals we can take along the cyclic arrangement $1,2,\dots,n,1$.   Following the reasoning of \cite{methuku2014exact}, we partition $\I(n)$ into chains of intervals in two different ways.  We take the set of all $n$ chains of the form $\{\{i\},\{i,i+1\},\dots,\{i,i+1,\dots,i+n-2\}\}$ (additions taken modulo $n$), $i \in [n]$, which we call \emph{ascending chains}.  We also take the set of all chains of the form $\{\{i\},\{i,i-1\},\dots,\{i,i-1,\dots,i-n+2\}\}$, $i \in [n]$, which we call \emph{descending chains}.  The set of all $n$ ascending chains together with the set of all $n$ descending chains forms a set of $2n$ chains.  Both the set of ascending chains and the set of descending chains form partitions of $\I(n)$.  Each set $X \in \I(n)$ appears exactly once in each type of chain.  In order to prove $\abs{\A \cap \I(n)} \le kn$, it is sufficient to prove that the average size of an intersection of $\A$ with one of the $2n$ ascending or descending chains is at most $k$ (see \cite{methuku2014exact} for an explicit calculation).

The following observation is simple, but critical for finding induced $Y_k$'s and $Y_k'$'s. If $C$ is an ascending chain crossing a descending chain $C'$ at a set $X$, then every pair of sets $Y\in C$ and $Z \in C'$ with $\abs{Y},\abs{Z} >\abs{X}$ are incomparable (and similarly for such pairs smaller than $\abs{X}$). Indeed, by the definition of the chains if $X = \{x,x+1,\dots,x+t\}$, then $x-1 \in Z$ but $x-1 \not\in Y$, whereas $x+t+1 \in Y$ but $x+t+1 \not\in Z$ (a similar argument holds for the case $\abs{Y},\abs{Z}<\abs{X}$).  

We will partition the $2n$ chains into classes which we refer to as groupings (usually consisting of both ascending and descending chains).  To this end, we introduce some notation which we will use to determine which chain goes in which grouping.   Let $C$ be an ascending chain containing at least one set from $\F$.  The predecessor of $C$ is the unique descending chain $C'$ containing the set $\min (C \cap \F)$.  Let $h(C)$ be the number of sets in $C'\cap \F$ which are smaller than $\min (C \cap \F)$.   Similarly, for a descending chain $C$, the predecessor $C'$ of $C$ is the unique ascending chain containing the set $\max(C \cap \F)$, and $h(C)$ is the number of sets in $C' \cap \F$ which are larger than $\max(C \cap \F)$.  Given an ascending or descending chain $C$ containing a set $X$, we call the other chain $C'$ which contains $X$ the chain crossing $C$ at $X$.  

For any chain $C$ we define the length of $C$, $\ell(C)$, to be $\abs{C \cap \F}$.  For an ascending chain $C$ of length $s$, we denote the elements of $C \cap \F$ by $C^1,C^2,\dots,C^s$ where  $C^1\subset C^2 \subset \dots \subset C^s$. For a descending chain $C$ of length $s$, we denote the elements of $C \cap \F$ by $C^1,C^2,\dots,C^s$ where  $C^1\supset C^2 \supset \dots \supset C^s$. For a collection $E$ of chains, the weight of $E$ is $w(E) = \sum_{C\in E} (\ell(C)-k)$.  Our goal will be to construct a series of groupings whose combined weight is at most $0$. 

We will begin by considering those chains $C$ with $\ell(C) \ge k+1$ such that $h(C) \ge k-1$.  Let $\ell(C) = k + r$ where  $r\ge 1$.  The chains crossing $C$ at $C^2,C^3,\dots,C^{r+1}$ have only one set from $\F$.  Forming a group $E$ consisting of $C$ along with these $r$ chains, we have
\begin{displaymath}
w(E) = r + r(1-k) \le 0.  
\end{displaymath}
Call such a grouping $E$ a \emph{type 1} grouping.  Form a type 1 grouping for every such chain $C$.

Next, we consider those chains $C$ with $\ell(C) \ge 2k-1-h(C)$ where $h(C) \le k-2$. Let $\ell(C) = 2k-1-h(C) + r$ where $r\ge 0$.  The chains crossing $C$ at $C^{k-h(C)},C^{k-h(C)+1},\dots,C^{k-h(C)+r}$ have only one set from $\F$. Forming a grouping $E$ consisting of $C$ along with these $r+1$ chains yields a weight
\begin{align*}
w(E) &= k-1 - h(C) + r + (r+1)(1-k) \\
&= -h(C) + r(2-k) \\
&\le -h(C).
\end{align*}
Call such a grouping $E$ a \emph{type 2} grouping.  Form a type 2 grouping for every such chain $C$. 

Finally, we consider those chains with $k+1 \le \ell(C) \le 2k-2-h(C)$ where $h(C) \le k-2$. We will define our grouping (which we call a \emph{type 3} grouping) in a step-by-step way, adding chains one at a time.  We introduce two parameters $m$ and $t$ which we will define inductively as we create the grouping.  Let $C=C_1$ be an ascending chain (the descending case is completely analogous) with $k+1 \le \ell(C) \le 2k-2-h(C)$, and set $m(C_1) = k-h(C_1)$.  Then, $2 \le m(C_1) \le k$ and the chain $C_2$ crossing $C_1$ at $C_1^{m(C_1)}$ contains no element of $\F$ larger than $C_1^{m(C_1)}$ (for otherwise we would have an induced $Y_k$ since we would have $h(C_1)$ sets from the predecessor of $C_1$ and $m(C_1)$ sets from $C_1$ all comparable to each other, and contained in at least one more set from $C_1$ as well as contained in an incomparable set from $C_2$ ).  We define $t(C_1)$, the size of the ``tail'' of $C_1$, to be the number of sets in $C_1 \cap \F$ larger than $C_1^{m(C_1)}$.   Then, 
\begin{equation}
t(C_1) = \ell(C_1) - m(C_1) = \ell(C_1)-k+h(C_1).
\end{equation}
Note that $1 \le t(C_1) \le k-2$.

If $C_2$ is already in some earlier grouping then  we stop and define our current grouping $E$ by $E = \{C_1\}$. In this case $w(E) = t(C_1) - h(C_1)$.

If $C_2$ is not in some earlier grouping we continue.  Now, $t(C_1)$ will play a completely analogous role for $C_2$ as $h(C_1)$ played for $C_1$.  Define $m(C_2)$ by $m(C_2) = k-t(C_1)$. If $m(C_2) \ge \ell(C_2)$ we stop and let $E = \{C_1,C_2\}$.  We have
\begin{align*}
w(E) = (\ell(C_1) - k) + (\ell(C_2)-k) &\le (t(C_1) - h(C_1)) + (m(C_2)-k) \\
&= -h(C_1).
\end{align*}
If $m(C_2) \le \ell(C_2)-1$ we consider the chain $C_3$ crossing $C_2$ at $C_2^{m(C_2)}$.  Let $t(C_2) = \ell(C_2) -m(C_2)$.  If $C_3$ has already been considered we stop and let $E=\{C_1,C_2\}$ (we will calculate the weight in general later) otherwise we continue with $C_3$.  Observe that $C_3$ contains no element of $\F$ smaller than $C_2^{m(C_2)}$ for otherwise we would have an induced $Y_k'$.  Observe that $2 \le m(C_2) \le k-1$.  Since $C_2$ has at most $2k-2-h(C_2)$ elements (we have already dealt with the cases of longer chains), and $h(C_2) = t(C_1)$, it follows that $t(C_2) = \ell(C_2) - m(C_2) \le k-2$.  

Continuing in this way, suppose we have already encountered $C_1,C_2,\dots,C_s$ and are now considering $C_{s+1}$ crossing $C_s$ at $C_s^{m(C_s)}$.   In each preceding step we have $m(C_{i+1}) = k - t(C_i)$ and $t(C_{i+1}) = \ell(C_{i+1})-m(C_{i+1}) = \ell(C_{i+1})-k + t(C_i)$. We also have $2 \le m(C_i) \le k-1$ and $1 \le t(C_i) \le k-2$.  Consider the chain $C_{s+1}$.  If it has already been considered by an earlier grouping (or is equal to $C_1$), then form the grouping $E = \{C_1,C_2,\dots,C_s\}$ with weight
\begin{align*}
w(E) &= \sum_{i=1}^s (\ell(C_i) - k) \\
&= t(C_1) - h(C_1) + \sum_{i=2}^s (t(C_i)-t(C_{i-1}))\\
&= t(C_s) - h(C_1).
\end{align*}

If $C_{s+1}$ has $m(C_{s+1}) \ge \ell(C_{s+1})$, then we let $E = \{C_1,C_2,\dots,C_{s+1}\}$ and we have
\begin{align*}
w(E) &= \sum_{i=1}^{s+1} (\ell(C_i) - k) \\
&= t(C_1) - h(C_1) + \sum_{i=2}^{s} (t(C_i)-t(C_{i-1})) + (\ell(C_{s+1}) - k) \\
&\le t(C_1) - h(C_1) + \sum_{i=2}^{s} (t(C_i)-t(C_{i-1})) + (m(C_{s+1})-k)\\
&= -h(C_1),
\end{align*}
since $m(C_{s+1}) = k - t(C_s)$.

If $C_{s+1}$ has $m(C_{s+1}) \le \ell(C_{s+1})-1$, then we continue with $C_{s+1}$.  Now, $m(C_{s+1}) = k - t(C_s)$ and $1 \le t(C_s) \le k-2$ so it follows that $2 \le m(C_{s+1}) \le k-1$. Also, since $\ell(C_{s+1}) \le 2k - 2 - t(C_s)$ (for otherwise $C_{s+1}$ would have been considered earlier), we have 
\begin{displaymath}
t(C_{s+1}) = \ell(C_{s+1}) - m(C_{s+1}) \le 2k-2-t(C_s) - (k - t(C_s)) = k-2,
\end{displaymath}
so $1 \le t(C_{s+1}) \le k-2$.

We continue to create type 3 groupings until all remaining chains of length greater than $k$ have been used.  Observe that the only groupings $E=\{C_1,C_2,\dots,C_s\}$ which may have positive weight are those type 3 groupings which terminated because we encountered a previously used chain.  In this case the grouping had a weight of $t(C_s)-h(C_1)$.  Let us form a directed graph on the groupings.  If a grouping $E$ is terminated because we would use a chain from $E'$, put a directed edge from $E$ to $E'$.  Observe that a grouping $E$ can can have at most one incoming edge and one outgoing edge.  Thus, the graph consists of directed paths and cycles (including 1-vertex loops).  The sum of the weights of the groupings in a cycle is clearly 0.  If a grouping has an incoming edge but not an outgoing one, then it must have weight at most $-h(C_1)$ (observe that encountering a type~1 grouping is not possible since the first chain of such a grouping intersects its predecessor at its largest or smallest element).  Thus, in the path case the total weight is at most $-h(C_1)$ where $C_1$ is the first chain in the first grouping (the one with no incoming edge).  

Since any chain which has not been considered in a grouping has weight at most 0, and the sum of the weights of the groupings is at most $0$, we have the desired result.

\bibliography{YinvertedY}
\end{document}